\documentclass[12pt, reqno]{amsart}
\usepackage{amsmath, amsthm, amscd, amsfonts, amssymb, graphicx, color,bm,ulem}
\usepackage[bookmarksnumbered, colorlinks, plainpages]{hyperref}
\hypersetup{colorlinks=true,linkcolor=red, anchorcolor=green, citecolor=cyan, urlcolor=red, filecolor=magenta, pdftoolbar=true}
\usepackage{setspace}
\newtheorem{theorem}{Theorem}[section]
\newtheorem{lemma}[theorem]{Lemma}
\newtheorem{proposition}[theorem]{Proposition}
\newtheorem{corollary}[theorem]{Corollary}
\theoremstyle{definition}

\theoremstyle{remark}

\numberwithin{equation}{section}

\definecolor{darkgreen}{rgb}{.1,.5,0}

\begin{document}
	
	\setcounter{page}{1}

	\title[Model Theory for Sub-$2$--normal Operators]{Model Theory for Operators Related to \\Square Roots of Normal Operators}
	
	

	
\author[R.E. Curto  and T. Prasad]{Ra\'ul E. Curto  and Thankarajan Prasad}
	
	\address{ R.E Curto, Department of Mathematics,University of Iowa, Iowa City, IA 52242, USA.}
\email{\textcolor[rgb]{0.00,0.00,0.84}{raul-curto@uiowa.edu}}
\address{T. Prasad, Department of Mathematics, Nalanda University, 
Rajgir, District Nalanda-803116, Bihar, India.}
	\email{\textcolor[rgb]{0.00,0.00,0.84}{ prasadvalapil@gmail.com} ; \textcolor[rgb]{0.00,0.00,0.84}{prasad@nalandauniv.edu.in}}
	\date{Received: date / Accepted: date}
	
	\keywords{model theory, $2$--normal operator, $2$--subnormal operator, weighted shift, square roots of operators}
	
	\subjclass[2020]{ 47B20; 47A10}
	
	\date{Received: xxxxxx; Revised: yyyyyy; Accepted: zzzzzz.}

\begin{abstract}
	
	In this paper we prove that a square root of a cyclic normal operator is unitarily equivalent to a block multiplication operator on a vector-valued Lebesgue space with a $2$--normal symbol. \ In addition, we show that a cyclic operator which admits a cyclic $2$--normal extension is unitarily equivalent to a block multiplication operator on a corresponding  vector-valued Hardy space with $2$--normal symbol. \  We consider the existence of bounded point evaluations in the vector-valued Hardy space setting; as an application, we prove that an operator admitting a cyclic $2$--normal extension has nontrivial invariant subspaces. \ We also study uniqueness for minimal $n$--normal extensions of operators that have cyclic $2$--normal extensions.
	
\end{abstract}
\maketitle

%
%

\section{introduction}

The celebrated spectral theory of normal operators says that normal operators acting on separable Hilbert spaces are unitarily equivalent to a countable orthogonal direct sum of multiplication operators on suitable $L^{2}$ spaces. \ We recall that a Hilbert space operator is subnormal if it has a normal extension. \ The class of subnormal operators, introduced by P.R. Halmos \cite{Halmos2} and initially developed by Halmos and J. Bram \cite{Bram,Halmos1,Halmos3} is an interesting extension of the well-studied class of normal operators. \ Bram\cite{Bram} proved that subnormal operators are unitarily equivalent to multiplication operators on a Hardy space and, as a result, multiplication operators on $H^{2}$ spaces can be regarded as the universal model for subnormal operators. \ This result is the  catalyst  for the study of the connections between subnormal operator theory and analytic function theory. \ 

Let $\mathcal{H}$ and $\mathcal{K}$ be separable complex Hilbert spaces, and let $B(\mathcal{H},\mathcal{K})$ denote the algebra of all  bounded linear operators from $\mathcal{H}$ to $\mathcal{K}$; we also write $B(\mathcal{H})=B(\mathcal{H},\mathcal{H})$. \ Recall that an operator $T\in B(\mathcal{H})$ is said to be \textit{$n$--normal} if  $T^{*} T^{n} = T^{n}T^{*}$\cite{patel}. \ Alternatively, an operator $T$ is $n$--normal if and only if $T^{n}$ is normal.

The present authors initiated the study of some natural generalizations of subnormal operators, namely, the class of sub-$n$--normal operators \cite{CP}. \ An operator $T\in B(\mathcal{H})$ is said to be $n$--subnormal if $T^{n}$ is subnormal. \ An operator $T\in B(\mathcal{H})$ is said to be sub-$n$--normal if it is the restriction of an $n$--normal operator to an invariant subspace; that is, there exists a Hilbert space $\mathcal{K}$ containing $\mathcal{H}$ and an $n$--normal operator $S$ on $\mathcal{K}$ such that $\mathcal{H} \subseteq \mathcal{K}$ and $Tx=Sx$ for all $x\in \mathcal{H}$ \cite{CP}.

Our interest in theses classes of operators is partly motivated by a long-standing open question in operator theory, recorded as Problem 5.6 in \cite{ConwayFeldman}: Characterize the subnormal operators having a square root.\\

The above-mentioned classes can be summarized in the following diagram from \cite{CP}:
 
$\begin{array}{ccccc} 
&& \framebox[1.7in][c]{
$\begin{array}{c}
\textrm{subnormal} \\
\end{array}$
} && \\
& \mbox{\huge{\rotatebox{45}{$\Longleftarrow$}}} \hspace*{-.1in} && \mbox{\huge{\rotatebox{90}{\rotatebox{45}{$\Longleftarrow$}}}} \\

\framebox[1.2in][l]{
$\begin{array}{c}
\textrm{sub-$n$--normal} \\
\end{array}$ 
}  &&\mbox{\huge{$\Longrightarrow$}}&& 
\framebox[1.2in][l]{
$\begin{array}{c}
\textrm{$n$--subnormal} \\
\end{array}$
} \\
&  \mbox{\huge{\rotatebox{90}{\rotatebox{45}{$\Longleftarrow$}}}} \hspace*{-.1in} && \mbox{\huge{\rotatebox{45}{$\Longleftarrow$}}} & \label{diagram}  \\
&& \framebox[1.95in][l]{
$\begin{array}{c}
\textrm{$n$--th root of hyponormal} \\
\end{array}$}
\end{array}$

\bigskip

Consider now the Hilbert space $\ell^2$ with its standard orthonormal basis $\{e_j\}_{j=0}^\infty$ (note that we begin indexing at zero). \ Given a bounded sequence of positive real numbers $\alpha \equiv \{\alpha_j\}_{j \ge 0}$, we define the {\it unilateral weighted shift} $W_\alpha$ acting on $\ell^2$ by $W_\alpha e_j := \alpha_j e_{j+1}$, and extend it to all of $\ell^2$ by linearity. \  It is well-known that (i) $W_{\alpha}$ is never normal; (ii) quasinormal if and only if it is a scalar multiple of the (unweighted) unilateral shift $U_+$; and (iii) hyponormal if and only if the sequence $\alpha$ is non-decreasing. \ 

On the other hand, recall that the Hardy space of the unit circle $\mathbb{T}$ is the closed subspace $H^2 \equiv H^2(\mathbb{T})$ of $L^2 \equiv L^2(\mathbb{T},\frac{d\theta}{2 \pi})$ spanned by the polynomials $\mathbb{C}[z]$. \ The above-mentioned unilateral shift $U_+$ is (canonically) unitarily equivalent to the multiplication operator $M_z \in B(L^2(\mathbb{T},\frac{d \theta}{2 \pi}))$ restricted to $H^2(\mathbb{T})$. \ 

As is customary, we let $M_n \equiv M_{n \times n}$ denote the algebra of $n \times n$ matrices over $\mathbb{C}$. \ We denote by $L^2 _{\mathbb{C}^n}$ (resp. $L^{\infty} _{M_n}$) the Hilbert space of all $\mathbb{C}^n$--valued Lebesgue square integrable functions on the unit circle (resp. the Banach space of all $M_n$--valued essentially bounded functions on the unit circle).  

%
%

\section{Model theory}

We begin with a notion introduced in \cite{CP}. \ Given the unit circle $\mathbb{T} \subseteq \mathbb{C}$ and the $C^*$--algebra $M_2 \equiv M_2(\mathbb{C})$ of $2 \times 2$ matrices over $\mathbb{C}$, consider the class $\mathcal{N}_2^{(n)}$ of functions $\Phi:\mathbb{T} \rightarrow M_2$ such that $\Phi(z)^n$ is a normal $2 \times 2$ matrix a.e. on $\mathbb{T}$. \ For $\Phi \in L^{\infty}_{M_{2}} \cap \mathcal{N}_2^{(n)}$, it was shown in \cite[Example 3.10]{CP} that the block multiplication operator $M_\Phi$ is $n$--normal. \ We will now establish a converse, that is, we will show that that every cyclic $2$--normal operator is (unitarily equivalent to) a multiplication operator $M_{\Phi} \in B(L^{2}(,\mu)_{\mathbb{C}^2})$, whose symbol $\Phi$ is a bounded $2$--normal function.  

We first need two well-known elementary lemmas, whose proofs we include for the sake of completeness.

\begin{lemma} \label{elemlem}
Let
$$
R = \begin{bmatrix}
      D& E\\
     0& F\\
 \end{bmatrix}
$$
be an arbitrary $2 \times 2$ operator matrix acting on $\mathcal{H} \oplus \mathcal{H}$, and assume that $R$ has a cyclic vector $(x,y)$, where $x,y \in \mathcal{H}$. \ Then $F$ is cyclic, with cyclic vector $y$.
\end{lemma}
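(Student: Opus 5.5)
The plan is to compute the second coordinate of $R^k(x,y)$ and use the upper triangular form of $R$. Since $R$ is upper triangular with respect to $\mathcal{H}\oplus\mathcal{H}$, a straightforward induction on $k$ shows that
$$
R^k=\begin{bmatrix} D^k & E_k\\ 0 & F^k\end{bmatrix}
$$
for some operators $E_k\in B(\mathcal{H})$. (Explicitly, $E_k=\sum_{j=0}^{k-1}D^{j}EF^{k-1-j}$, but only the zero in the lower-left corner matters.) Hence for every polynomial $p$ we get
$$
p(R)\begin{bmatrix} x\\ y\end{bmatrix}=\begin{bmatrix} p(D)x+(\ast)\\ p(F)y\end{bmatrix}.
$$
That is, the second coordinate of $p(R)(x,y)$ is exactly $p(F)y$.

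Next, let $P:\mathcal{H}\oplus\mathcal{H}\to\mathcal{H}$ be the projection onto the second coordinate, $P(u,v)=v$. It is bounded and surjective, and the previous step shows that $P\,p(R)(x,y)=p(F)y$. Cyclicity of $(x,y)$ for $R$ means that $\{p(R)(x,y): p\in\mathbb{C}[z]\}$ is dense in $\mathcal{H}\oplus\mathcal{H}$. A continuous surjection maps dense sets onto dense sets: given $w\in\mathcal{H}$ and $\varepsilon>0$, choose $p$ with $\|p(R)(x,y)-(0,w)\|<\varepsilon$; then $\|p(F)y-w\|\le\|P\|\,\varepsilon=\varepsilon$. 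Therefore $\{p(F)y\}$ is dense in $\mathcal{H}$, so $F$ is cyclic with cyclic vector $y$.

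There is no real obstacle in this argument. The only point needing care is the induction confirming that the $(2,1)$ entry of $R^k$ stays zero and the $(2,2)$ entry is $F^k$. This follows immediately from multiplying $R^{k}$ by $R$ and reading off the bottom row $[\,0\ \ F^{k}\,]\cdot R=[\,0\ \ F^{k+1}\,]$.
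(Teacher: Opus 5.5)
Your proof is correct and follows the same route as the paper. Both arguments use the upper-triangular form of $R^k$ to show that the second coordinate of $p(R)(x,y)$ is $p(F)y$, then approximate $(0,w)$ by vectors $p(R)(x,y)$ and read off second coordinates; your norm-one coordinate projection just states the final density step a bit more explicitly than the paper does.
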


\begin{proof}
Observe that, for every integer $n \ge 2$, we have
$$
R^n = \begin{bmatrix}
      D^n & D^{n-1}E+D^{n-2}EF+ \cdots + DEF^{n-2}+EF^{n-1}\\
     0& F^n\\
 \end{bmatrix}.
$$
It readily follows that, given a polynomial $p \in \mathbb{C}[z]$, we have
$$
p(R)\begin{bmatrix}
      x\\
     y\\
 \end{bmatrix} = \begin{bmatrix}
      p(D)x + z\\
     p(F)y\\
 \end{bmatrix},
$$
for some vector $z \in \mathcal{H}$. \ Since every vector $(0,z)$ can be approximated by a sequence of vectors $\{p_k(R)(x,y)\}_{k \ge 1}$, we conclude that $z$ can be approximated by a sequence of vectors of the form $\{p_k(F)y\}_{k \ge 1}$, and therefore $F$ is cyclic with cyclic vector $y$.
\end{proof}

\begin{lemma} \label{newlem}
Let $S$ and $T$ be $2$--normal operators such that $ST+TS=0$. \ Then $S+T$ is $2$--normal.
\end{lemma}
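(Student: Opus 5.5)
We compute $(S+T)^2$ directly and show that it is normal. Expanding gives
$$
(S+T)^2 = S^2 + ST + TS + T^2 = S^2 + T^2,
$$
since $ST+TS=0$. By hypothesis $S^2$ and $T^2$ are normal, because $S$ and $T$ are $2$--normal. A sum of two normal operators need not be normal, so the remaining task is to show that $S^2$ and $T^2$ commute, and then invoke Fuglede's theorem.

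The first step is commutation, and it is pure algebra. From $ST=-TS$ we get
$$
S^2T = S(ST) = -S(TS) = -(ST)S = (TS)S = TS^2 .
$$
So $S^2$ commutes with $T$, and therefore with $T^2$.

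The second step upgrades this to $*$-commutation. Since $S^2$ is normal and commutes with $T^2$, the Fuglede theorem gives that $S^2$ commutes with $(T^2)^*$. Taking adjoints, $(S^2)^*$ commutes with $T^2$; applying Fuglede once more, with the normal operator $T^2$ and the commutation $T^2S^2=S^2T^2$, shows that $T^2$ commutes with $(S^2)^*$ as well. Thus $S^2$ and $T^2$ are commuting normal operators with $S^2(T^2)^*=(T^2)^*S^2$.

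The third step is the normality check. Put $A=S^2$ and $B=T^2$. Then
$$
(A+B)^*(A+B)=A^*A+A^*B+B^*A+B^*B .
$$
Each term equals its counterpart in $(A+B)(A+B)^*=AA^*+AB^*+BA^*+BB^*$: the diagonal terms agree because $A$ and $B$ are normal, and the cross terms agree because $A^*B=BA^*$ and $B^*A=AB^*$ by the previous step. Hence $(S+T)^2$ is normal, which means $S+T$ is $2$--normal.

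The only non-elementary ingredient is the Fuglede step. Without it, commutation of $A$ and $B$ alone would not control the cross terms $A^*B$ and $B^*A$.
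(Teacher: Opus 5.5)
Your proof is correct and follows the paper's route: expand $(S+T)^2$, use $ST+TS=0$ to kill the cross terms, and conclude that $S^2+T^2$ is normal. It is more complete than the paper's argument. The paper simply asserts that normality of $S^2+T^2$ ``readily follows'' from normality of $S^2$ and $T^2$, which is not valid in general, since a sum of normal operators need not be normal. Your identity $S^2T=TS^2$, combined with Fuglede's theorem, supplies exactly this missing justification.

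A minor streamlining: a single application of Fuglede (to either $S^2$ or $T^2$), followed by taking adjoints, already gives both $(S^2)^*T^2=T^2(S^2)^*$ and $(T^2)^*S^2=S^2(T^2)^*$. Your second invocation of Fuglede is therefore redundant.
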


\begin{proof}
Observe that 
$$
(S+T)^2=S^2+ST+TS+T^2=S^2+T^2.
$$
Since $S^2$ and $T^2$ are normal, it readily follows that $S+T$ is also $2$--normal.
\end{proof}
 
\begin{theorem} \label{thmcyclic} 
Let $T\in B(\mathcal{H})$ be a cyclic $2$--normal operator. \ Then there exists a finite measure space $(X,\mu)$ and a  $2$--normal symbol $\Psi \in$ $L^{\infty}_{M_2}$ such that $T$ is unitarily equivalent to $M_{\Psi} \in B(L^{2}(X,\mu)_{\mathbb{C}^2})$. 
\end{theorem}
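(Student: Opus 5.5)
The plan is to exploit that $N:=T^2$ is normal and that $T$ commutes with $N$. By the Fuglede theorem, $T$ then also commutes with $N^*$, so $T$ lies in the commutant of the von Neumann algebra $W^*(N)$. Let $x$ be a cyclic vector for $T$. Every polynomial $p(T)$ can be written as $q_0(N)+q_1(N)T$. Hence $\mathcal{H}$ is the closed span of $C^*(N)x+C^*(N)Tx$.

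The first step is to split $\mathcal{H}$ into two cyclic subspaces for $N$.
\begin{itemize}
\item Let $\mathcal{K}_0=\overline{C^*(N)x}$. It reduces $N$, and $N|_{\mathcal{K}_0}$ is a $*$-cyclic normal operator.
\item On $\mathcal{K}_1:=\mathcal{K}_0^{\perp}$, which also reduces $N$, the vector $P_{\mathcal{K}_1}Tx$ is $*$-cyclic for $N|_{\mathcal{K}_1}$. This is because $P_{\mathcal{K}_1}$ commutes with $C^*(N)$.
\end{itemize}
By the spectral theorem for $*$-cyclic normal operators, there are finite compactly supported measures $\mu_0,\mu_1$ on $\sigma(N)$ with $N|_{\mathcal{K}_i}\cong M_z$ on $L^2(\mu_i)$. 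Set $\mu=\mu_0+\mu_1$ and write $d\mu_i=w_i\,d\mu$. The map $f\mapsto \sqrt{w_i}\,f$ identifies $L^2(\mu_i)$ with $\chi_{A_i}L^2(\mu)$, where $A_i=\{w_i>0\}$. In this way $\mathcal{H}\cong \chi_{A_0}L^2(\mu)\oplus\chi_{A_1}L^2(\mu)$, and $N$ becomes multiplication by $z$ on each summand.

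Next, write $T=[T_{ij}]$ with respect to $\mathcal{K}_0\oplus\mathcal{K}_1$. Each entry $T_{ij}$ intertwines the multiplication operators $M_z$ and $M_{\bar z}$ on the $j$-th and $i$-th summands. An operator between subspaces $\chi_{A_j}L^2(\mu)\to\chi_{A_i}L^2(\mu)$ that intertwines $M_z,M_{\bar z}$ commutes with all $M_\varphi$, $\varphi\in L^\infty(\mu)$. Since $L^\infty(\mu)$ is maximal abelian on $L^2(\mu)$, such an operator is multiplication by some $\psi_{ij}\in L^\infty(\mu)$ supported in $A_i\cap A_j$, with $\|\psi_{ij}\|_\infty\le\|T\|$. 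Thus $T\cong M_\Psi$ with $\Psi=[\psi_{ij}]\in L^\infty_{M_2}$. Moreover $\Psi^2=z\,\mathrm{diag}(\chi_{A_0},\chi_{A_1})$ a.e., because $M_{\Psi}^2=M_{\Psi^2}$ must equal $N$. This is a normal matrix a.e., so $\Psi$ is $2$--normal automatically.

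The main obstacle is that this realizes $T$ on the subspace $\chi_{A_0}L^2(\mu)\oplus\chi_{A_1}L^2(\mu)$ rather than on all of $L^2(X,\mu)_{\mathbb{C}^2}$. On the set where the multiplicity is $1$ (say $A_0\setminus A_1$, and also $A_1\setminus A_0$), one coordinate is missing. I would handle this by enlarging the measure space to remove the deficiency. Replace $X=\sigma(N)$ by a disjoint union $X=(A_0\cap A_1)\sqcup X'$. Here $X'$ is a single copy of $(A_0\triangle A_1)$ with a measure chosen so that the scalar $L^2$-space on $A_0\triangle A_1$ is repackaged as a $\mathbb{C}^2$-valued space. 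Concretely:
\begin{itemize}
\item For the nonatomic part, split each set into two measure-isomorphic halves and pair them.
\item For atoms, note that an atom of multiplicity one is an eigenvector of $T$ inside a $2\times2$ block. Treat it by adjoining a $1\times1$ block; equivalently, pair two such atoms, which is where an anticommutation argument as in Lemma \ref{newlem} can be used to keep the resulting symbol $2$--normal.
\end{itemize}
The symbol on the repackaged part is block diagonal with scalar entries, so $2$--normality is preserved. I expect the bookkeeping at atoms to require the most care, whereas the analytic content is entirely in the Fuglede/commutant step above.
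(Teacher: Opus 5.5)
Your analytic reduction is correct, and it isolates the real difficulty. Fuglede's theorem and the maximality of $L^\infty(\mu)$ give $T\cong M_{\Psi}\oplus D$. Here $M_\Psi$ acts on $L^2(A_0\cap A_1,\mu)_{\mathbb{C}^2}$ with $\Psi^2=zI$ there. The operator $D=M_{\psi_{00}}\oplus M_{\psi_{11}}$ acts on $\chi_{A_0\setminus A_1}L^2(\mu)\oplus\chi_{A_1\setminus A_0}L^2(\mu)$ and is a normal direct summand of multiplicity one, because the off-diagonal symbols vanish off $A_0\cap A_1$.

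The gap is the repackaging of $D$, and it is not bookkeeping. Splitting the non-atomic part into isomorphic halves works. But an odd finite number of multiplicity-one atoms cannot be paired, and ``adjoining a $1\times1$ block'' produces a space no longer of the form $L^2(X,\mu)_{\mathbb{C}^2}$. Lemma~\ref{newlem} does not help either: a paired set of atoms just gives a diagonal symbol.

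No repair exists, because the statement fails in exactly this situation. Trivially, $T=0$ on $\mathcal{H}=\mathbb{C}$ is cyclic and $2$--normal, while $\dim L^2(X,\mu)_{\mathbb{C}^2}$ is always even or infinite. For an infinite-dimensional instance, take $T=M_t$ on $L^2(\delta_2+dt|_{[0,1]})$. It is cyclic and self-adjoint, and its only eigenvalue is $2$, with multiplicity one. Suppose $T\cong M_\Psi$ on some $L^2(X,\mu)_{\mathbb{C}^2}$. The eigenspace cannot meet the non-atomic part of $\mu$, since there it would be an infinite-dimensional $L^\infty(\mu)$-module. So it lives at an atom $p$, where $M_\Psi$ acts on a reducing copy of $\mathbb{C}^2$ by the self-adjoint matrix $\Psi(p)$. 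The second eigenvalue of $\Psi(p)$ either doubles the multiplicity of $2$ or is a new eigenvalue of $T$, a contradiction.

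The paper's proof does not confront this obstruction. It starts from the Radjavi--Rosenthal form $T=A\oplus\begin{bmatrix}B&C\\0&-B\end{bmatrix}$ and discards the normal summand $A$ ``without loss of generality'' on the grounds of cyclicity, which is not justified. That summand is precisely your $D$. What both arguments actually establish is the theorem for cyclic $2$--normal $T$ with no normal direct summand, or more generally whenever $D$ itself can be repackaged. In your setup, having no normal summand forces $\mu(A_0\triangle A_1)=0$, so you are finished with $X=A_0$ at the end of your second step.

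Within that scope your route is a legitimate alternative to the paper's. It replaces the structure theorem, Lemma~\ref{elemlem}, and Lemma~\ref{newlem} by Fuglede's theorem and a commutant computation for $W^*(T^2)$. It also gives $2$--normality of the symbol directly from $\Psi^2=zI$. As a proof of the statement as written, however, your last step cannot be completed. The statement needs an extra hypothesis, such as the absence of a normal direct summand, or a weaker conclusion of the form $T\cong M_\Psi\oplus D$ with $D$ normal.
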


\begin{proof}
Since $T$ is $2$--normal operator, we can apply \cite[Theorem 1]{RadRos} to write 
$$
T=A \oplus \begin{bmatrix}
      B& C\\
     0& -B\\
 \end{bmatrix},
$$
where $A$ and $B$ are normal and $C$ is a positive one-to-one operator commuting with $B$. \ Since $T$ is cyclic, without loss of generality we may assume that $T$ has no normal part, that is,   
$$
T=\begin{bmatrix}
      B& C\\
     0& -B\\
 \end{bmatrix}.
$$
By Lemma \ref{elemlem}, we know that $B$ cyclic. \ Since $B$ is also normal, there exists a finite measure space $(X,\mu)$ and a function $\phi \in L^{\infty}(X,\mu)$ such that $B$ is unitarily equivalent to the operator $ M_{\phi}$ acting  on $L^{2}(X,\mu)$. \ (In view of the cyclicity of $T$, we may even assume that $\phi(z)=z$ for all $z \in X$ (see \cite[IX.8]{Conway1}). \ Thus, $T$ can be written as
$$
\begin{bmatrix}
      UM_{\phi}U^{*}& C\\
     0&  -UM_{\phi}U^{*}
 \end{bmatrix} = 
\begin{bmatrix}
      UM_{\phi}U^{*}& 0\\
     0&  -UM_{\phi}U^{*}
      \end{bmatrix} +  \begin{bmatrix}
      0& C\\
     0& 0
 \end{bmatrix}.
$$
That is, 
$$
T = \begin{bmatrix}
      U& 0\\
     0&  U
      \end{bmatrix} \begin{bmatrix}
      M_{\phi}& 0\\
     0&  -M_{\phi}
      \end{bmatrix} \begin{bmatrix}
      U^{*}& 0\\
     0&  U^{*}
      \end{bmatrix} + \begin{bmatrix}
      U& 0\\
     0&  U
      \end{bmatrix}   \begin{bmatrix}
      U^{*}& 0\\
     0&  U^{*}
      \end{bmatrix}  \begin{bmatrix}
      0& C\\
     0& 0
 \end{bmatrix}    \begin{bmatrix}
      U& 0\\
     0&  U
      \end{bmatrix}  \begin{bmatrix}
      U^{*}& 0\\
     0&  U^{*}
      \end{bmatrix}.
			$$
Then 
$$
T = \begin{bmatrix}
      U& 0\\
     0&  U
      \end{bmatrix}    \Bigg(     \begin{bmatrix}
      M_{\phi}& 0\\
     0&  -M_{\phi}
      \end{bmatrix}   +  \begin{bmatrix}
      0&U CU^*\\
     0& 0
 \end{bmatrix}     \Bigg) \begin{bmatrix}
      U^{*}& 0\\
     0&  U^{*}
      \end{bmatrix} ,$$   
where $ \begin{bmatrix}
      U& 0\\
     0&  U
      \end{bmatrix} $ is a unitary operator on $L^{2}(X,\mu)_{\mathbb{C}^2}$  to $\mathcal{H}$.
      
Since $T$ is cyclic, it follows that $B$ is cyclic, by Lemma \ref{elemlem}. \ Since $CB=BC$ and since $B$ is cyclic and also unitarily equivalent to the multiplication operator $ M_{\phi}$ on $L^{2}(X,\mu)$, it follows that $C$ is a multiplication operator $M_{\xi}$ on $L^{2}(X,\mu)$.

That is, $T$ is unitarily equivalent to 
$$
M_{\Phi} + M_{\Xi} = M_{\Phi + \Xi} \in B(L^{2}(X,\mu)_{\mathbb{C}^2})
$$
where  $\Phi(z):=\begin{bmatrix}
      z& 0\\
     0&  -z
      \end{bmatrix}$ and $\Xi(z):=\begin{bmatrix}
      0& z\\
     0&  0\
      \end{bmatrix}$.
      
  Since 
      $\Phi(z)$ and $\Xi$ are $2$--normal, Lemma \ref{newlem} implies that $\Psi:=\Phi +\Xi$ \; is $2$--normal. \ Thus, $T$ is unitarily equivalent to a multiplication operator with $2$--normal symbol.
      
Conversely, suppose that $T$ is unitarily equivalent to $M_{\Phi} +M_{\Xi}$, where 
$\Phi(z)=\begin{bmatrix}
      z& 0\\
     0&  -z
      \end{bmatrix}$.
				
				It is easy to see that  $M_{\Phi} M_{\Xi} + M_{\Xi} M_{\Phi} =0$, which implies that $M_{\phi} C=C M_{\phi}$. \ Using this equality, we see that $ ( M_{\Phi} +M_{\Xi})^{2}$ is normal; that is, $ M_{\Phi} + 
				M_{\Xi}$ is $2$--normal.
      \end{proof}

Given a positive integer $k$, let $N_{n}$ denote the set of $n$--normal functions. \ If $\Phi \in L^{\infty}(X,\mu)  \cap  N_{n}$, then $M_\Phi$, acting on  $L^2(X,\mu)_{\mathbb{C}^n }$, is $n$--normal. \ It follows that, for $\Phi  \in H^{\infty}(X,\mu) \cap  N_{n}$, $M_\Phi$ restricted to the invariant subspace $H^2(X,\mu)_{\mathbb{C}^n}$ is sub-$n$--normal \cite{CP}. \ (Here $H^{2}(X,\mu)_{\mathbb{C}^{2}}$ denotes the closure of $\mathbb{C}[z]_{\mathbb{C}^2}$ in $L^{2}(X,\mu)_{\mathbb{C}^{2}}$.) \ The following theorem says that $M_\Phi$ is unitarily equivalent to a sub-$2$--normal operator, with the symbol $\phi$ both analytic and $2$--normal.

\begin{theorem} \label{t2}
Let $T$ be a cyclic operator with a cyclic $2$--normal extension. \ Then $T$ is unitarily equivalent to a block multiplication operator $M_\Psi$ acting on a suitable Hardy space $H^{2}(X,\mu)_{\mathbb{C}^{2}}$, where
$$
\Psi(z) := \begin{bmatrix}
     z& z \\
     0& -z
     \end{bmatrix} \quad (z \in \mathbb{C}).
		$$
		
\end{theorem}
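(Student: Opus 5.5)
The plan is to mimic the proof of Theorem \ref{thmcyclic}, replacing $L^2$ by its Hardy-type subspace. Let $S\in B(\mathcal{K})$ be a cyclic $2$--normal extension of $T$, so $\mathcal{H}\subseteq\mathcal{K}$ is $S$--invariant and $S|_{\mathcal{H}}=T$. By Theorem \ref{thmcyclic} (in the normalized form obtained in its proof), there is a finite measure space $(X,\mu)$, which we may take to be a compactly supported measure on $\mathbb{C}$ with $\phi(z)=z$, and a unitary $W:\mathcal{K}\to L^2(X,\mu)_{\mathbb{C}^2}$ such that $WSW^*=M_{\Psi_0}$, where $\Psi_0(z)=\begin{bmatrix} z & \xi(z)\\ 0 & -z\end{bmatrix}$. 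Here $\xi\ge 0$ is the symbol of $C$, and $\xi$ is nonzero a.e. because $C$ is one-to-one.

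The first step is to normalize $\xi$ so that the symbol becomes $\begin{bmatrix} z&z\\0&-z\end{bmatrix}$, as in the statement. The natural device is a diagonal change of variables $\Lambda(z)=\operatorname{diag}(1,\lambda(z))$, which gives
\[
\Lambda^{-1}\Psi_0\Lambda=\begin{bmatrix} z & \xi\lambda\\ 0&-z\end{bmatrix}.
\]
So one wants $\lambda=z/\xi$. This does not give a unitary in general, so instead I would absorb the change into the measure: in the second coordinate replace $\mu$ by a weighted measure $|\xi/z|^2d\mu$. Equivalently, one realizes $L^2(X,\mu)_{\mathbb{C}^2}$ as $L^2(\mu)\oplus L^2(\nu)$ and uses the unitary $f\mapsto (\xi/z)f$ between the second summands. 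I would allow $(X,\mu)$ in the conclusion to mean such a pair of measures. This step needs $z\neq0$ a.e., which holds after discarding an atom at $0$; that atom can be handled separately, or avoided by the cyclicity assumptions. I expect this to be the main obstacle: making the normalization of $\xi$ to $z$ legitimate while keeping the operator bounded, which requires $\xi/z$ or its inverse to be controlled. If this fails, I would state the result with symbol $\begin{bmatrix} z&\xi\\0&-z\end{bmatrix}$.

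The second step is to identify $W\mathcal{H}$. Let $h_0$ be a cyclic vector for $T$. Then
\[
\mathcal{H}=\overline{\{p(T)h_0\}}=\overline{\{p(M_\Psi)Wh_0\}}.
\]
Writing $p(\Psi(z))$ explicitly via $\Psi^2=z^2I$ gives $p(\Psi)=p_e(z)I+p_o(z)z^{-1}\Psi$, where $p_e,p_o$ are the even and odd parts of $p$. Hence $p(M_\Psi)Wh_0$ is a $\mathbb{C}[z]_{\mathbb{C}^2}$-combination of $Wh_0$, so $W\mathcal{H}$ is the closure of polynomial-module multiples of a single vector. Rather than using the $\Lambda$ normalization above, I would use the freedom to choose the unitary so that $Wh_0$ becomes a constant vector: on the relevant support, multiply by a suitable scalar unitary function commuting with $\Psi$. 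Then $W\mathcal{H}$ is the closure of the polynomials of the form $p_e(z)e+p_o(z)z^{-1}\Psi e$. Choosing measures so that this closure is exactly $H^2(X,\mu)_{\mathbb{C}^2}$, the closure of $\mathbb{C}[z]_{\mathbb{C}^2}$, completes the proof, since $M_\Psi$ leaves that space invariant and restricts to $T$.
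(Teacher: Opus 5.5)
\emph{Note: the gap identified below cannot be closed, because the statement itself fails in general.}

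Your skeleton is the paper's: model $S$ via Theorem~\ref{thmcyclic}, then transport the cyclic subspace of $T$. Granting a model $S\cong M_\Psi$, the identity $W\mathcal{H}=\overline{\{p(M_\Psi)Wh_0\}}$ is correct. The paper's own proof gets no further. Its map $p(\Psi)y\mapsto p(T)x$ is defined only on $\{p(\Psi)y\}$, whose closure is the $M_\Psi$--cyclic subspace generated by $y=R^*x$. That this closure equals $H^2(X,\mu)_{\mathbb{C}^2}$ is asserted there, not proved.

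Your attempt to supply that identification is the genuine gap, for two reasons.
\begin{itemize}
\item $Wh_0$ cannot be made constant. For $z\neq 0$ the matrices commuting with $\Psi(z)$ are $\alpha I+\beta J$ with $J:=z^{-1}\Psi(z)=\begin{bmatrix}1&1\\0&-1\end{bmatrix}$, and such a matrix is unitary only if $\beta=0$. So unitary-valued multipliers commuting with $\Psi$, even combined with reweighting $\mu$, change only the phase and length of $Wh_0(z)$, never its direction in $\mathbb{C}^2$.
\item Even for a constant $e$, $p(\Psi)e=p_e(z)e+p_o(z)Je$ with $p_e$ even and $p_o$ odd, which is far from $\mathbb{C}[z]_{\mathbb{C}^2}$. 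For arc length on $\mathbb{T}$, even and odd polynomials are orthogonal, so $ze$ is not in the closure when $e,Je$ are independent.
\end{itemize}
The sentence ``choosing measures so that this closure is exactly $H^2(X,\mu)_{\mathbb{C}^2}$'' carries the entire theorem and is not argued.

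It cannot be argued, since the conclusion fails in general. Write $P^2(\mu)$ for the closure of $\mathbb{C}[z]$ in $L^2(\mu)$. Then $H^2(X,\mu)_{\mathbb{C}^2}=P^2(\mu)\oplus P^2(\mu)$ and $M_\Psi(f,g)=(z(f+g),-zg)$, so $M_\Psi|_{H^2}$ has range $zP^2(\mu)\oplus zP^2(\mu)$. Since $\overline{zP^2(\mu)}$ has codimension at most one in $P^2(\mu)$, $\dim\ker(M_\Psi|_{H^2})^*\in\{0,2\}$. The conclusion would therefore force every cyclic $T$ covered by the theorem to have dense range. But the unilateral shift $U_+$ is a counterexample:
\begin{itemize}
\item $U_+$ is cyclic;
\item the bilateral shift is a cyclic normal (hence $2$--normal) extension of it, since any $f\in L^2(\mathbb{T})$ with $f\neq0$ a.e.\ and $\int\log|f|\,d\theta=-\infty$ is cyclic for $M_z$;
\item yet $\dim\ker U_+^*=1$.
\end{itemize}
What these arguments actually give is only $T\cong M_\Psi|_{\overline{\{p(\Psi)y\}}}$.

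Your first step has the same status. The two-measure device takes you off $L^2(X,\mu)_{\mathbb{C}^2}$. On a single such space the normalization is impossible unless $\xi=|z|$ a.e., because $[M_\Psi^*,M_\Psi]^2=5\,M_\Psi^{*2}M_\Psi^2$ for every $\mu$. For $\begin{bmatrix}M_z&M_\xi\\0&-M_z\end{bmatrix}$, by contrast, the two sides are $(\xi^4+4|z|^2\xi^2)I$ and $5|z|^4I$. The proof of Theorem~\ref{thmcyclic} simply writes $z$ in place of $\xi$. It also discards the normal part of $S$, which for the bilateral shift is everything. So there is no ``normalized form'' there to quote.
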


\begin{proof}
Let $S$ be the cyclic $2$--normal extension of the cyclic operator $T$. \ Then $S$ is unitarily equivalent to a block multiplication operator $M_{\Psi}$ on $L^2_{\mathbb{C}^2} \equiv L^2(X,\mu)_{\mathbb{C}^2}$. \ That is, $S=R  M_{\Psi} R^{*}$, where $R=\begin{bmatrix}
      U & 0\\
     0 & U
      \end{bmatrix}$ is an isometric isomorphism from $L^{2}_{\mathbb{C}^2}$ to $\mathcal{H} \oplus \mathcal{H}$.
			
Now
\begin{align*}
|| p(T) x || ^{2}&=\langle p(T)x,  ~  p(T) x\rangle \\&= \langle p(S)x, ~(p(S))x\rangle _{\mathcal{H}}\\\ &
= \langle R ~ p(M_{\Psi})  R^{*}x, ~ R ~ p( M_{\Psi})     )  R^{*}x\rangle _{L^2_{\mathbb{C}^2} }\\&
=\langle    M_{p (\Psi)}  R^{*}x,   M_{p(\Psi)} R^{*}x\rangle _{L^2_{\mathbb{C}^2} }\\&
=\int_{X}  |p(\Psi)(R^{*}x)|^2  d \mu=\int_{X}  |p(\Psi)(y))|^{2} d \mu
\end{align*}
where $y:=R^{*}x \in L^{2}(X,\mu)_{\mathbb{C}^2}$.
 
Define $W(p(\Psi))(y):= p(T)x$. \ We see that  $ W$ is an isometry. \ Since range of $W$ is dense in $\mathcal{H}$,  we have that $W$ is an isometric isomorphism from  $H^{2}(\mathbb{D},\mathbb{C}^2)$ onto $\mathcal{H} \oplus \mathcal{H}$.

Let $q(z):=zp(z) \; (z \in \mathbb{C})$, and observe that $Tp(T^{})x = q(T^{})x$. 
     
Now
\begin{align*}
W^{-1}T W p(\Psi)(z) (y)& = W^{-1}T p(T) x \\  & = W^{-1}q(T) x\\ &= q(\Psi(z))y\\&  
=\begin{bmatrix} 
     z& z\\
     0& - z
     \end{bmatrix}  p\left( \begin{bmatrix} 
     z& z\\
     0&  -z\
     \end{bmatrix} \right) (y)
\\ &=   
M_{    \begin{bmatrix} 
     z& z\\
     0&  -z
     \end{bmatrix} } p(\Psi(z))(y)
\end{align*}
Therefore, $T$ is unitarily equivalent to  $M_{\Psi(z)}$.
\end{proof}

\subsection{An Application to Bounded Point Evaluations} \ Given a positive Borel measure $\mu$ supported by a compact subset $K$ of $\mathbb{C}$, let $P_{C^{n}}^{t}(\mu)$ be the closure of $C_{n}(z)$ in $L_{C^{n}}^{t}(\mu)$, where
$$
C_{n}(z):=\{ P(z): P=(p_{1}, \cdots p_{n}): p_{i} \textrm{ a polynomial in a complex variable} z ; (i=1,\ldots,n)\}.
$$
A point $\lambda \in  \mathbb{C}$ is a bounded point evaluation for $P_{C^{n}}^{t}(\mu)$ if $F: C_{n}(z) \rightarrow \mathbb{C}^n$, defined by
$$
F(P):=P(\lambda)  (= (p_{1}(\lambda) \cdots p_{n}(\lambda)),
$$
has a bounded extension to  $P_{C^{n}}^{t}(\mu)$, also denoted by $f$. \ That is, we must have
$$
|| f(P)|| \leq M ||P||_{ P_{C^{n}}^{t}}
$$
for all $P \in P_{C^{n}}^{t}(\mu)$. \ (For additional information on bounded point evaluations, we refer the reader to \cite{Bre}, \cite{Conway2}, \cite{ConwayFeldman} and \cite{ConYang}.) 

Bounded point evaluations (BPE) exist for $P_{C^{n}}^{t}(\mu)$ if and only if the block multiplication operator $M_\lambda$ has a nontrivial invariant subspace. \ For, if $z_{0}\in \mathbb{C}$, $M_{z_{0}}=\{ f \in  P_{C^{n}}^{t}(\mu):  f(z_{0})=0\}$ is a nontrivial invariant under multiplication $M_{\Psi}$  on $L_{C^{n}}^{t}(\mu)$. \ If  $P_{C^{n}}^{2}(\mu) \neq L_{C^{n}}^{2}(\mu)$ and $\lambda$ is a bounded point evaluation for $P_{C^{n}}^{2}(\mu)$, then the set of  functions in $P_{C^{n}}^{2}(\mu)$ which vanish on $\lambda$  is a nontrivial invariant subspace for $M_{\Psi}$.


As a result, to settle in the affirmative the invariant subspace problem for a class of operators unitarily equivalent to the block multiplication operator $M_{\phi}$ on $H^2_{\mathbb{C}^n}$, it is sufficient to prove that $H_{C^{n}}^{2}(\mu) $ has a BPE. 

By Thomson's theorem \cite{Tho}, the classical Hardy spaces have BPE and so does $H_{C^{n}}^{2}(\mu)$. \ Then, by Theorem \ref{t2}, if $T$ is a cyclic operator with a cyclic $2$--normal extension, then $T$ has a nontrivial invariant subspace.

%
%

\section{sub-$2$--normal unilateral weighted shifts}

We now extend, to  the case of sub-$2$--normal unilateral weighted shifts, some classical results obtained, for subnormal operators, by C. Cowen and J. Long \cite{cl} and by R. Gellar and L.J. Wallen \cite{GW}. \ First, we need some notation. \ Let $\{e_{i}\}_{i\ge 0}$ be an orthonormal basis for $\mathcal{H}$, let $\alpha \equiv \{\alpha_i\}_{i \ge 0}$ be a sequence of positive numbers, and let $W_\alpha$ be the associated unilateral weighted shift, given by $W_\alpha e_{i}:=\alpha_{i}e_{i+1} \; (i \ge 0)$.  

\begin{theorem} \label{thmc}
Let $W_\alpha$ be a unilateral weighted shift with weight sequence $\alpha$, and assume that $\sup_i \alpha_{i}=1$. \ Then $W_\alpha$ is a cyclic operator that admits a cyclic $2$--normal extension if and only if there is a measure $\mu$  on $[0,1]$ such that
	$$(\alpha_{0}\alpha_{1}\cdots\alpha_{2i-1} )^{2}=\int_{0}^{1}r^{4i}d\mu(r).$$
\end{theorem}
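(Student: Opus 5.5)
We will treat the two implications separately and route both through the even-power moments of $W_\alpha$. \ For a weighted shift, $W_\alpha^2$ acts on the even basis vectors $\{e_{2i}\}_{i\ge 0}$ as a weighted shift. \ Its weights are $\alpha_{2i}\alpha_{2i+1}$, and the moments are
$$
\|W_\alpha^{2i}e_0\|^2=(\alpha_0\alpha_1\cdots\alpha_{2i-1})^2 .
$$
The displayed condition therefore says exactly that the restriction $W_\alpha^2|_{\bigvee\{e_{2i}\}}$ has a Berger measure. \ Concretely, after the change of variable $s=r^2$ on $[0,1]$, the moment sequence $\gamma_i:=(\alpha_0\cdots\alpha_{2i-1})^2$ is the Stieltjes moment sequence $\int s^{2i}\,d\nu(s)$. \ The normalization $\sup_i\alpha_i=1$ puts the support inside $[0,1]$.

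\emph{Necessity.} \ Suppose $S$ is a cyclic $2$--normal extension of $W_\alpha$. \ Then $S^2$ is normal, so $W_\alpha^2=S^2|_{\mathcal H}$ is subnormal. \ Restricting further to the invariant subspace $\bigvee_i\{e_{2i}\}$, we obtain a subnormal unilateral weighted shift with weights $\alpha_{2i}\alpha_{2i+1}\le 1$. \ Berger's theorem (the Gellar--Wallen characterization of subnormal shifts) then gives a probability-type measure $\nu$ on $[0,1]$ with $\gamma_i=\int_0^1 t^{2i}\,d\nu(t)$. \ Writing $t=r^2$ and pushing $\nu$ forward to a measure $\mu$ on $[0,1]$ gives $\gamma_i=\int r^{4i}\,d\mu(r)$, as required. \ Alternatively, one can argue through Theorem \ref{t2}: there $W_\alpha$ is modeled as $M_\Psi$ on $H^2(X,\mu)_{\mathbb C^2}$ with $\Psi^2=z^2 I$, and one reads off
$$
\|W_\alpha^{2i}e_0\|^2=\int |z|^{4i}|y|^2\,d\mu .
$$
The radial part of $|y|^2d\mu$ then yields the measure.

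\emph{Sufficiency.} \ Given $\mu$, we build the extension explicitly. \ The condition shows that the even-indexed shift $W_e$ (on $\bigvee e_{2i}$) is subnormal, being unitarily equivalent to $M_{z^2}$ on a closure of polynomials in $L^2(\nu)$. \ The real issue is the odd-indexed part $W_o$ on $\bigvee e_{2i+1}$, with moments $(\alpha_1\cdots\alpha_{2i})^2$; the hypothesis says nothing about these directly. \ We plan to write $W_\alpha$ in the block form $\begin{bmatrix}0&A\\ B&0\end{bmatrix}$ relative to even$\oplus$odd. \ The square is then $W_\alpha^2=BA\oplus AB$ acting as $W_e\oplus W_o$, and $W_o=B W_e B^{-1}$ on the dense range. \ We then look for a $2$--normal extension of the shape $\begin{bmatrix}0&\tilde A\\ \tilde B&0\end{bmatrix}$ on $K\oplus K$. \ This requires $\tilde B\tilde A=N$, where $N$ is the normal extension of $W_e$, and also requires $\tilde A\tilde B$ to be normal. \ A natural choice is $\tilde B=M_{\beta}$ and $\tilde A=M_{z^2/\beta}$ as multiplication operators on $L^2(\mu)$. \ Here $\beta(r)$ is chosen so that $\tilde B|_{\text{even}}$ reproduces the weights $\alpha_{2i}$, using the model $e_{2i}\mapsto z^{2i}/\|z^{2i}\|$. \ Both products of commuting multiplication operators are normal, so the extension is $2$--normal. \ Cyclicity of the extension will come from the density of polynomials in the chosen $L^2$ space, which we will ensure by shrinking the space to the closed span of the orbit.

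The main obstacle is this sufficiency step: matching the odd weights $\alpha_{2i}$ against a single function $\beta$ compatible with $\mu$. \ In general the odd moments are constrained only through the factorization of $\gamma_i$. \ We expect to need either to enlarge $\mu$ to a measure on $[0,1]\times\mathbb T$, or to use the block model of Theorem \ref{t2}, with off-diagonal symbol entry $z$, to absorb the discrepancy. \ The first attempt will be to model the odd vectors $e_{2i+1}$ as $\Psi^{2i+1}$ applied to a suitable vector $y\in L^2(\mu)_{\mathbb C^2}$. \ This would reduce the odd moments to the same radial integrals.
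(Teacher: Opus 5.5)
Your necessity argument is correct, and it is more direct than the paper's, which reads radial moments off the model of Theorem~\ref{t2}. You only need three facts: $W_\alpha^2=S^2|_{\mathcal H}$ is subnormal, $\bigvee\{e_{2i}\}$ is invariant for it, and Berger's theorem applies. Neither cyclicity of $S$ nor the model is used.

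The genuine gap is sufficiency, which you leave open. It cannot be filled, because the ``if'' direction of the statement is false; the paper's proof, too, only argues the ``only if'' part. Your own remark that the hypothesis says nothing about the odd-indexed part is exactly the obstruction. Run your necessity argument on the other reducing subspace $\bigvee\{e_{2i+1}\}$ of $W_\alpha^2$. It shows that any $2$--normal extension forces the shift $W_o$, with weights $\alpha_{2i+1}\alpha_{2i+2}$ and moments $(\alpha_1\alpha_2\cdots\alpha_{2i})^2$, to be subnormal, and in particular to have non-decreasing weights. The hypothesis, however, constrains only the products $\alpha_{2i}\alpha_{2i+1}$.

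Here is a concrete counterexample. Fix $0<c<1$ and let $\alpha_0=c$, $\alpha_1=1$, and $\alpha_{2i}=1$, $\alpha_{2i+1}=c$ for $i\ge 1$, i.e.\ $\alpha=(c,1,1,c,1,c,\dots)$. Then $\sup_i\alpha_i=1$ and $\alpha_{2i}\alpha_{2i+1}=c$ for every $i$, so $(\alpha_0\alpha_1\cdots\alpha_{2i-1})^2=c^{2i}=\int_0^1 r^{4i}\,d\delta_{\sqrt c}(r)$. But $W_o$ has weights $\alpha_1\alpha_2=1$, $\alpha_3\alpha_4=c$, $\alpha_5\alpha_6=c,\dots$, so it is not hyponormal. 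Hence $W_\alpha^2$ is not subnormal, and $W_\alpha$ has no $2$--normal extension at all, cyclic or not.

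So none of your proposed routes can work: no choice of $\beta$, no enlargement of $\mu$ to $[0,1]\times\mathbb T$, and no model $e_{2i+1}\leftrightarrow\Psi^{2i+1}y$. Each would exhibit $W_o$ as subnormal. Independently, obtaining cyclicity by passing to the closed span of an orbit would break $2$--normality. For example, $M_z$ on $L^2(\mathbb T)$ is $2$--normal, but its restriction $U_+$ to $H^2$ is not.

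What can actually be proved is the ``only if'' direction. Any correct converse needs at least the extra condition $(\alpha_1\alpha_2\cdots\alpha_{2i})^2=\int_0^1 r^{4i}\,d\mu'(r)$ for some measure $\mu'$ on $[0,1]$. It would also need a genuine construction of the extension.
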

  
\begin{proof}
From Theorem \ref{t2}, there is a unitary operator $U: \mathcal{H}\rightarrow  H^{2}(\overline{\mathbb{D}}, \mathbb{C}^{2}) $  such that    $T$ is unitarily equivalent to a block multiplication operator $ M_{\Psi} $ on $H^{2}(\overline{\mathbb{D}}, \mathbb{C}^{2})$, where $\Psi(z) =\begin{bmatrix} 
     z& z\\
     0& -z
     \end{bmatrix} \quad (z\in \mathbb{C})$ and $H^{2}(\overline{\mathbb{D}}, \mathbb{C}^{2})$ denotes the closure of $\mathbb{C}[z]_{\mathbb{C}^2}$ in $L^2(X,\mu)_{\mathbb{C}^2}$. \ We readily see that $||Ue_{0}||=1$. \ 
		
		If we define $\beta_0:=1$ and $\beta_{2i}:=\alpha_{0}\alpha_{1}\cdots\alpha_{2i-1} \; (i\geq 1)$, then $T^{2i}e_{0}=\beta_{2i}e_{2i}$. \\
Therefore,
$$
UT^{2i}e_{0}=M_{\Psi} Ue_{0}= \begin{bmatrix} 
     z& z\\
     0& -z
     \end{bmatrix}^{2i} Ue_{0}.
$$
If we  let $ r^{2i}:=||\begin{bmatrix} 
     z& z\\
     0& -z
     \end{bmatrix}^{2i} Ue_{0}||^2,$  then\\                    
  \begin{align*}                        
    \int |r|^{4i}d\mu = \int |r^{2i}|^{2}d\nu   = ||UT^{2i}e_{0}||^{2} = ||\beta_{2i}e_{ni}||^{2} =\beta^{2}_{2i}.
  \end{align*}
If $\nu$ is a measure defined on $[0,1]$ by $\nu(\Delta):=\mu(\{r: |r^{2i} |\in \Delta \}) \; (\textrm{for } \Delta \textrm{ a Borel subset of } [0,1])$, then the desired result follows.
\end{proof}

To simplify the notation, in the next result we allow the weight sequence $\alpha$ to be indexed by $\mathbb{N}$ instead of $\mathbb{N} \cup \{0\}$.
  
\begin{theorem}\label{cl}
\ For $n$ fixed and $0<r<1$, the weighted shift $W_\alpha$ with weights $\alpha_{i}=(1-r^{4i(2i+1)})^{\frac{1}{2}} \; \; (i=1,2,\ldots)$ is a cyclic operator with a cyclic $2$--normal extension.
\end{theorem}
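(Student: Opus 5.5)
The plan is to reduce the statement to the moment characterization of Theorem \ref{thmc}, in the same way that Cowen and Long handle the subnormal analogue. We will exhibit a finite positive measure $\mu$ on $[0,1]$ with
$$
(\alpha_1\alpha_2\cdots\alpha_{2i})^2=\int_0^1 t^{4i}\,d\mu(t)
$$
for every $i$, with indexing shifted to start at $1$ as the paper allows. We also need $\sup_i\alpha_i=1$. This holds because $0<r<1$ makes $\alpha_i=(1-r^{4i(2i+1)})^{1/2}$ increase strictly to $1$. Theorem \ref{thmc} then gives at once that $W_\alpha$ is cyclic and has a cyclic $2$--normal extension.

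The first step is to write the even products explicitly:
$$
\beta_{2i}^2=\prod_{k=1}^{2i}\bigl(1-r^{4k(2k+1)}\bigr).
$$
The model for this step is the Cowen--Long identity. There, weights of the form $(1-r^{k})^{1/2}$ produce products $\prod_{k\le n}(1-q^{k})$. These are moments of a discrete measure supported on the points $q^{j}$, with masses given by the $q$-binomial theorem, namely Euler's expansion
$$
\prod_{k\ge1}(1-xq^{k})^{-1}=\sum_{j}\frac{x^{j}q^{j}}{(q;q)_j}.
$$
Accordingly, we look for atoms at the points $t_j=r^{c j}$ for a suitable $c$, with weights $m_j\ge 0$. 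We need
$$
\sum_j m_j\,t_j^{4i}=\beta_{2i}^2 \quad\text{for all } i\ge 0,
$$
and $\sum_j m_j=1$ corresponds to $\beta_0=1$. The ratios $\beta_{2i+2}^2/\beta_{2i}^2$ are explicit products of two factors. This suggests using a $q$-series identity (Euler or Jacobi triple product type) in the variable $x=r^{4i}$ to read off $m_j$ as coefficients of an expansion of the infinite product.

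The main obstacle is this second step: checking that the exponent pattern $4k(2k+1)$ really fits a product identity whose coefficients $m_j$ are \emph{nonnegative}, as they must be for $\mu$ to be a positive measure. If no exact identity appears, the fallback is Hausdorff's criterion for the sequence $s_i=\beta_{2i}^2$, viewed as moments in the variable $u=t^4\in[0,1]$. We would then verify complete monotonicity, $(-1)^m\Delta^m s_i\ge 0$, by induction on $m$. The induction would use the multiplicative structure
$$
s_{i+1}=s_i\bigl(1-r^{a_i}\bigr)\bigl(1-r^{b_i}\bigr)
$$
together with the fact that products of completely monotone sequences remain completely monotone. Finally, we note that only the even moments are constrained by Theorem \ref{thmc}, which gives extra freedom in the choice of $\mu$.
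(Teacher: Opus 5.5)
Your frame is the right one: reduce to Theorem \ref{thmc} and get the measure from an Euler-type $q$-series, which is what the paper does. But the step you call ``the main obstacle'' is not just unfinished. For the weights as literally written it is impossible, so neither branch of your plan can close.

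Put $a_k=4k(2k+1)$ and $s_i=\beta_{2i}^2=\prod_{k=1}^{2i}(1-r^{a_k})$. Then $s_i$ decreases to $s_\infty=\prod_{k\ge1}(1-r^{a_k})>0$. Since $a_{k+1}-a_k\ge 4$,
\[
0<s_i-s_\infty\le\sum_{k\ge 2i+1}r^{a_k}\le\frac{r^{4(2i+1)(4i+3)}}{1-r^4},
\]
which is super-geometric decay, like $r^{32i^2}$.

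Now suppose $s_i=\int_0^1 u^i\,d\nu(u)$ for a positive measure $\nu$. This is exactly what Theorem \ref{thmc} asks for, with $u=t^4$. Only this push-forward of $\mu$ matters, so the ``extra freedom'' you mention does not help.
\begin{itemize}
\item Dominated convergence gives $\nu(\{1\})=s_\infty$, so $d_i:=s_i-s_\infty=\int_{[0,1)}u^i\,d\nu>0$.
\item Cauchy--Schwarz gives $d_{i+1}^2\le d_i d_{i+2}$, so $d_{i+1}/d_i$ is nondecreasing.
\item Hence $d_i\ge d_0(d_1/d_0)^i$, a geometric lower bound that contradicts the super-geometric upper bound above.
\end{itemize}
So no nonnegative masses $m_j$ exist, no $q$-series identity can supply them, and your Hausdorff fallback must fail at some order.

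The fallback induction was not sound in any case. The recursion $s_{i+1}=s_i\,c_i$ is not a pointwise product of sequences. Moreover, $c_i=(1-r^{a_{2i+1}})(1-r^{a_{2i+2}})$ increases with $i$, so it is not completely monotone, and the closure-under-products lemma never applies.

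By Berger's theorem, the argument above shows that the restriction of $W_\alpha^2$ to $\bigvee_k e_{2k}$ (the shift whose moments are the $s_i$) is not subnormal. So $W_\alpha$ is not even $2$--subnormal, and the statement with exponent $4i(2i+1)$ is false as written. That exponent is a misprint.

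What the paper's proof actually treats is the case
\[
\beta_{2i}^2=(1-r^4)(1-r^8)\cdots(1-r^{8i})=(q;q)_{2i},\qquad q=r^4,
\]
that is, weights $(1-r^{4k})^{1/2}$. There the Cowen--Long computation goes through explicitly. Euler's identity with $x=q^{2i+1}$ gives
\[
(q;q)_{2i}=\frac{(q;q)_\infty}{(q^{2i+1};q)_\infty}=\pi_\infty\sum_{k\ge0}\frac{q^k}{(q;q)_k}\,\bigl(r^{2k}\bigr)^{4i}.
\]
So $\mu=\pi_\infty\sum_{k\ge0}\frac{r^{4k}}{(r^4;r^4)_k}\,\delta_{r^{2k}}$ is a positive probability measure on $[0,1]$, with atoms at $r^{2k}$ and the required moments, and Theorem \ref{thmc} finishes.

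The missing ingredient in your proposal is this: identifying the right product and writing down the explicit nonnegative masses. Aimed at the literal exponent $4k(2k+1)$, your plan cannot be completed.
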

\begin{proof} \ Recall first the 
	$q$--binomial formula \cite[p.~350]{As},
	$$
	1+ \sum_{k=1}^{\infty} \frac{(a;q)_{k}}{(q;q)_{k}}x^{k}= \frac{(ax;q)_{\infty}}{(x;q)_{\infty}},
	$$
	where $(a;q)_{k}=(1-a)(1-aq)....(1-1q^{k-1})$. 
	
	If we now let $a:=0$, $x:= r^{4(2i+1)}$ and $q:=r^{4}$, it follows that
	\begin{align*}
		1+ \sum_{k=1}^{\infty}\frac{ (r^{4(2i+1)})^{k}}{(1-r^{4})(1-r^{8})\cdots (1-r^{4k})}&=\prod_{j=ni+1}^{\infty}(1-r^{4j})^{-1}\\
		&=\pi_{\infty}^{-1}(1-r^{4})(1-r^{8})\cdots (1-r^{4(2i)}),
	\end{align*}
	where $\pi_{\infty}:=\prod_{j=1}^{\infty}(1-r^{4j})$.
	Thus
	\begin{align*}(\alpha_{0}\alpha_{1}\cdots\alpha_{2i-1} )^{2}
		&=(1-r^{4})(1-r^{8})\cdots (1-r^{4(2i)})\\
		&=\bigg[1+\sum_{k=1}^{\infty}r^{4k}(1-r^{4})^{-1}(1-r^{8})^{-1}\cdots (1-r^{4k})^{-1}(r^{k})^{4(2i)}\bigg]\pi_{\infty}\\
		&=\int_{0}^{1}t^{4i}d\mu(t).
	\end{align*}
	where $\mu$ is the discrete measure on $[0,1]$ with mass $\pi_{\infty}$ at 1 and $\pi_{\infty}r^{4k}(1-r^{2n})^{-1}(1-r^{8})^{-1}\cdots (1-r^{4k})^{-1}$ at $r^{k}$. \ Then by Theorem \ref{thmc}, it follows that the weighted shift $W_\alpha$ with weights $\alpha_{n}=(1-r^{4(2i+1)})^{\frac{1}{2}}$ for fixed $n$ and $i=1,2,.....$ is a cyclic operator that has a cyclic $2$--normal extension.
\end{proof}

\begin{theorem}	On a Hilbert space $\mathcal{H}$, let $T$  be a  cyclic operator admitting a cyclic $2$--normal extension, and let $x$ be a nonzero vector in $\mathcal{H}$. \ Then the weighted shift $W$ on $\ell^{2}$ with weight sequence $\{||T^{ni}x|| / ||T^{ni-n}x||  \}^{\infty}_{i=1}$ is a cyclic operator that admits a cyclic $2$--normal extension.
\end{theorem}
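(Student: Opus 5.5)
The plan is to reduce the statement to the moment characterization of Theorem \ref{thmc}, in the same way Cowen--Long derive the analogous subnormal result from Berger's theorem. Take $n=2$ throughout, so the weights are $\alpha_{i-1}:=\|T^{2i}x\|/\|T^{2i-2}x\|$ for $i\ge 1$; the reindexing follows the convention stated before Theorem \ref{cl}. We may assume $T^{2i}x\neq 0$ for all $i$: otherwise the shift is truncated and degenerates, and that case has to be handled separately or excluded. We may also normalize $\|x\|=1$ and rescale so that $\sup$ of the weights is $1$. The rescaling is harmless because multiplying $T$ by a positive scalar preserves both cyclicity and the existence of a cyclic $2$--normal extension.

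The first step is to identify the relevant products of weights. By construction they telescope:
$$\Big(\prod_{k=1}^{i}\frac{\|T^{2k}x\|}{\|T^{2k-2}x\|}\Big)^{2}=\|T^{2i}x\|^{2}.$$
So Theorem \ref{thmc} reduces the whole statement to one claim: there is a positive measure $\nu$ on $[0,1]$ with $\|T^{2i}x\|^2=\int_0^1 t^{4i}\,d\nu(t)$ for all $i$.

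The second step produces this measure from the model. Let $S\supseteq T$ be the cyclic $2$--normal extension. By Theorem \ref{thmcyclic}, or directly since $S^2$ is normal, we have
$$\|T^{2i}x\|^2=\|(S^2)^i x\|^2=\int_{\sigma(S^2)}|w|^{2i}\,dE_{x,x}(w),$$
where $E$ is the spectral measure of the normal operator $N:=S^2$. Push $E_{x,x}$ forward under $w\mapsto |w|^{1/2}$ to obtain $\nu$ on $[0,\|S\|]$. This gives $\|T^{2i}x\|^2=\int t^{4i}\,d\nu(t)$, and after the scaling $\nu$ lives on $[0,1]$. In the explicit model of Theorem \ref{t2} this is the same as computing $\int |\Psi(z)^{2i}y|^2\,d\mu$. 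Since $\Psi(z)^2=z^2 I$ there, this equals $\int|z|^{4i}|y|^2\,d\mu$, and we push forward by $z\mapsto|z|$.

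The main obstacle is the scaling. Theorem \ref{thmc} requires $\sup_i\alpha_i=1$ and a measure on $[0,1]$. The natural scaling makes the support of $\nu$ lie in $[0,1]$, but it does not obviously make the supremum of the weights exactly $1$. I would handle this by noting that the weights $\|N^ix\|/\|N^{i-1}x\|$ increase to $\sup\operatorname{supp}(\nu)^2$, by the Cauchy--Schwarz/log-convexity of moments. I would then normalize by that quantity, checking the non-normalized form of the moment criterion. Finally, I would apply the converse direction of Theorem \ref{thmc} to conclude that $W$ is cyclic with a cyclic $2$--normal extension.
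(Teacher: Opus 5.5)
Your proposal is correct and is essentially the paper's argument: you telescope the partial products of the weights to $\|T^{2i}x\|$, represent $\|T^{2i}x\|^2=\int t^{4i}\,d\nu(t)$, and apply the sufficiency direction of Theorem \ref{thmc}; the differences are that you build $\nu$ directly from the spectral measure of the normal operator $S^2$ (the paper instead quotes a positive operator-valued measure from the authors' earlier work), and that you actually handle the normalization $\sup_i\alpha_i=1$ and the degenerate case in which some $T^{2i}x=0$, both of which the paper skips. One indexing slip, which the paper also makes, is harmless: Theorem \ref{thmc} involves the product of the first $2i$ weights, which here is $\|T^{4i}x\|$ rather than $\|T^{2i}x\|$, but $\|T^{4i}x\|^2=\int|w|^{4i}\,dE_{x,x}(w)$ has the required form after pushing $E_{x,x}$ forward under $w\mapsto|w|$.
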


\begin{proof}
Since $T$ admits a cyclic $2$--normal extension $N$, there is a positive operator-valued measure $F$ supported on some interval $[0,a]$ in $\mathbb{R}$ such that $N^{*2i}N^{2i}=\int t^{4i}dF(t)$, by \cite[Theorem 3.6]{CP}. \ Then, for $0\neq x \in \mathcal{H}$,  $||T^{2i}x||^{2}=\int t^{4i}dQ(t)$, where $dQ(t)=\langle dF(t)x,x \rangle$. \ For a unit vector x, the $i$--th partial product of the weight sequence is  $||T^{2i}x||$. \ Therefore, by Theorem \ref{thmc}, $W \equiv T_{x}$ is a cyclic operator admitting a cyclic $2$--normal extension for every unit vector $x \in \mathcal{H}$, and therefore for every nonzero vector $x\in \mathcal{H}$.
\end{proof}

%
%

\section{Minimal $n$--normal Extensions}

Let $T\in B(\mathcal{H})$  be a sub-$n$--normal operator and let $S \in B(\mathcal{H})$ be an $n$--normal operator. \ Since $T$ is sub-$n$--normal, we
can find an $n$--normal extension  $\tilde{S}$ on a Hilbert space  $\mathcal{K}$, where $\mathcal{H} \subseteq \mathcal{K}$ and $\tilde{S} |_{\mathcal{H}} = T$. \ In light of \cite[Theorem 2.18]{patel}, we  see that $\tilde{S} \oplus S $ is an  $n$--normal operator on $\mathcal{K} \oplus  \mathcal{H}$, which is an extension for $T\oplus S$. \ Therefore, $T\oplus S$ is sub-$n$--normal. \ Also, $S' \oplus S$  can be identified as an $n$--normal extension of $T$. \ That is, $n$--normal extensions of a sub-$n$--normal operator are not necessarily unique (up to unitary equivalence). \ The following result gives a criterion for minimality.

\begin{theorem} \label{thm 229}
	Let $T$  be an operator that admits an extension $S$ which is $m$--normal for every $m\geq n$. \ Then $S$ is a minimal $n$--normal extension of $T$ if and only if 
	$$
	\mathcal{K}=
	\bigvee\{(S^{*m})^{k}h; h\in \mathcal{H} ,  k\geq 0 , \text{ and
	} m \geq n\}.
	$$
\end{theorem}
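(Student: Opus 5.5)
We model the argument on the classical characterization of minimal normal extensions of subnormal operators, $\mathcal{K}=\bigvee\{S^{*k}h: h\in\mathcal{H},\ k\ge 0\}$. Throughout, $S\in B(\mathcal{K})$ with $S|_{\mathcal{H}}=T$, and $S$ is $m$--normal for every $m\ge n$. By \emph{minimal} we mean that no proper reducing subspace of $S$ contains $\mathcal{H}$ (equivalently, for the $n$--normal class, that $S$ has no proper restriction to a reducing subspace containing $\mathcal{H}$ which is still an $n$--normal extension of $T$). Put
$$
\mathcal{L}:=\bigvee\{(S^{*m})^{k}h:\ h\in\mathcal{H},\ k\ge 0,\ m\ge n\}.
$$
Clearly $\mathcal{H}\subseteq\mathcal{L}$ (take $k=0$). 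The plan is to show that $\mathcal{L}$ is the smallest reducing subspace for the relevant operators containing $\mathcal{H}$, and then read off both directions.

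\textbf{Step 1: invariance of $\mathcal{L}$.} First, $\mathcal{L}$ is invariant under $S^{*m}$ for each $m\ge n$: indeed $S^{*m}(S^{*m})^{k}h=(S^{*m})^{k+1}h$, so each generator is mapped to another generator. Next, $\mathcal{L}$ is invariant under $S^{m}$ for each $m\ge n$. Since $S^{m}$ is normal, $S^{m}$ commutes with $S^{*m}$, so
$$
S^{m}(S^{*m})^{k}h=(S^{*m})^{k}S^{m}h=(S^{*m})^{k}T^{m}h,
$$
and $T^{m}h\in\mathcal{H}$; hence each generator indexed by the same $m$ is mapped into $\mathcal{L}$. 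The remaining issue is cross terms: we need $S^{m}(S^{*m'})^{k}h\in\mathcal{L}$ for $m\ne m'$. We would handle this with Fuglede's theorem. From $S^{mm'}$ normal and the identity $S^{mm'}=(S^{m})^{m'}$, one gets that $S^{m}$ commutes with $S^{*mm'}$. For the general cross term we would use that the normal operators $S^{m}$ for $m\ge n$ (in particular $S^{n}$ and $S^{n+1}$) form a commuting family whose adjoints therefore commute with all of them. Concretely, $S^{*m'}$ commutes with $S^{m}$ because both are functions of the commuting normal pair $(S^{n},S^{n+1})$: every $m\ge n(n+1)$ can be written as $m=an+b(n+1)$ with $a,b\ge 0$, and for the small values of $m$ we would reduce via the integer relations between the exponents. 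Thus $\mathcal{L}$ reduces $S^{m}$ for every $m\ge n$. We expect this cross-commutation to be the main obstacle, and would first try to settle it via Fuglede--Putnam applied to $S^{m}S^{m'}=S^{m'}S^{m}$.

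\textbf{Step 2: invariance under $S$.} We still need $S\mathcal{L}\subseteq\mathcal{L}$. Here $S(S^{*m})^{k}h$ is handled by $S S^{*m}=S^{*m}S$ on the relevant vectors; this follows from Fuglede applied to $S$ and the normal $S^{m}$, since $S$ commutes with $S^{m}$, hence with $S^{*m}$. This gives $S(S^{*m})^{k}h=(S^{*m})^{k}Th\in\mathcal{L}$. Consequently $S|_{\mathcal{L}}$ is an extension of $T$ whose $n$-th power is the restriction of the normal operator $S^{n}$ to a reducing subspace, hence normal. So $S|_{\mathcal{L}}$ is an $n$--normal extension of $T$.

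\textbf{Step 3: the two directions.} If $S$ is minimal, Step 2 forces $\mathcal{L}=\mathcal{K}$. Conversely, suppose $\mathcal{K}=\mathcal{L}$, and let $\mathcal{M}\supseteq\mathcal{H}$ be a subspace reducing $S$ on which $S$ is $n$--normal. Then $\mathcal{M}$ is invariant under each $S^{*m}$, so it contains every generator $(S^{*m})^{k}h$. Hence $\mathcal{M}\supseteq\mathcal{L}=\mathcal{K}$, which proves minimality.
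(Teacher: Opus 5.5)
\textbf{Verdict: there is a genuine gap, in Step 3.} Steps 1--2 are sound. $S$ commutes with the normal operator $S^m$, so by Fuglede $SS^{*m}=S^{*m}S$, and hence $S\mathcal{L}\subseteq\mathcal{L}$; this is exactly the paper's first step. Your cross-term worry in Step 1 is moot: $S^m\mathcal{L}\subseteq\mathcal{L}$ follows from $S\mathcal{L}\subseteq\mathcal{L}$, and $S^{*m'}(S^{*m})^kh=(S^{*(m'+mk)})^1h$ is itself a generator.

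\textbf{Where Step 3 fails.} The problem is the direction minimal $\Rightarrow \mathcal{L}=\mathcal{K}$. You define minimality through reducing subspaces of $S$. But Steps 1--2 only make $\mathcal{L}$ invariant under $S$ and reducing for the powers $S^m$, $m\ge n$. Nothing gives $S^*\mathcal{L}\subseteq\mathcal{L}$, and this is false in general, because the generators of $\mathcal{L}$ are the vectors $S^{*j}h$ with $j=0$ or $j\ge n$ only. In fact, with your definition the statement itself fails. Take $n=2$, $\mathcal{K}=\mathbb{C}^2$, $Se_1=0$, $Se_2=e_1$, $\mathcal{H}=\mathbb{C}e_1$, $T=0$. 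Then $S^m=0$ for all $m\ge 2$, so $\mathcal{L}=\mathcal{H}\neq\mathcal{K}$. Yet $S$, a $2\times 2$ Jordan block, has no nontrivial reducing subspace, so it is minimal in your sense.

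\textbf{The right definition, and what it then requires.} The notion under which the theorem holds, and the one the paper's proof actually tests, is this: there is no proper $S$-invariant subspace $\mathcal{M}\supseteq\mathcal{H}$ with $S|_{\mathcal{M}}$ $n$--normal. With it, your Step 2 immediately gives the direction minimal $\Rightarrow \mathcal{L}=\mathcal{K}$; in the example, $\mathcal{M}=\mathcal{H}$ shows $S$ is not minimal. However, your converse now has a hole. $\mathcal{M}$ is only $S$-invariant, so your claim that $\mathcal{M}$ is invariant under each $S^{*m}$ must be proved.
\begin{itemize}
\item For $m=n$, write $S^n=\begin{bmatrix}A&X\\0&Y\end{bmatrix}$ on $\mathcal{M}\oplus\mathcal{M}^{\perp}$. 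Since $S^n$ and $A=S^n|_{\mathcal{M}}$ are normal, the $(1,1)$ entry of $(S^n)^*S^n=S^n(S^n)^*$ gives $XX^*=A^*A-AA^*=0$. So $\mathcal{M}$ reduces $S^n$.
\item For $m>n$ you need an extra ingredient. $S^m|_{\mathcal{M}}$ is subnormal, and its $n$-th power $(S^n|_{\mathcal{M}})^m$ is normal. By Stampfli's theorem (a hyponormal operator with a normal power is normal), $S^m|_{\mathcal{M}}$ is normal. The same block computation then shows $\mathcal{M}$ reduces $S^m$.
\end{itemize}
Only then does $\mathcal{L}\subseteq\mathcal{M}$ follow.

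\textbf{Comparison with the paper.} The paper's own argument also only reaches $S^{*n}f\in\mathcal{M}$. Its claim $S^*\mathcal{L}\subseteq\mathcal{L}$, derived from $S^*S^{*km}h=S^{*(km+1)}h$, breaks at $k=0$ for the same reason as above. So this $m>n$ step has to be supplied in either write-up.
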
	

\begin{proof}

Write 
$$
\mathcal{L}:=\bigvee\{(S^{*m})^{k}h; h\in \mathcal{H} ,  k\geq 0, \textrm{ and } m\geq n\}.
$$
It is evident that $\mathcal{H}\subseteq \mathcal{L}$. \ Since  $S$ is $m$-normal for all $m \geq n$, it follows that $S^{*}$ is $m$--normal for all $m \geq n$. \ Then $ S ( S^{*km}h)= S^{*km}(Sh)= S^{*km}(Th) $ (using the fact that $Sh=Th$ for $h\in \mathcal{H}$). \ 	Thus, $ S ( S^{*km}h) =S^{*km} h_{1}$, where $h_1 := Th \in \mathcal{H}$.	\ That is, $S\mathcal{L}\subseteq \mathcal{L}$. \ Also, $S^* ( S^{*km}h)=S^{*(km+1)}h \in \mathcal{L}$, for all $h\in \mathcal H, k \ge 0, m \ge n$. \ Thus, $S^* \mathcal L \subseteq \mathcal L$. \ Let $P_{\mathcal L}$ denote the orthogonal projection of $\mathcal H$ onto $\mathcal L$, and let $S_{\mathcal L}$ denote the compression of $S$ to $\mathcal L$. \ Since $S^{*n}f \in \mathcal{L}$ and $S \mathcal{L}\subseteq \mathcal{L}$, we have 
	$$
	S_{\mathcal{L}}^{*n}S_{\mathcal{L}}f=P_{L}S^{*n}Sf=P_{\mathcal{L}}SS^{*n}f=SS^{*n}f
	$$
	and 	
	$$
	S_{\mathcal{L}}S_{\mathcal{L}}^{*n}f=SP_{\mathcal{L}}S^{*n}f=SS^{*n}f,
	$$
	for all $f\in \mathcal{H}$. \ Therefore, $S_{\mathcal{L}}$ is $n$--normal and $\mathcal{H}\subseteq ker[S_{\mathcal{L}}^{*n}, S_{\mathcal{\mathcal{L}}}]$. \ We need to show that $\mathcal{L}$ is minimal. \ 
	
\noindent Let $\mathcal{M}\subseteq \mathcal{K}$ be such that $S\mathcal{M}\subseteq \mathcal{M}$ and $S|_{\mathcal{M}}$ is an $n$--normal extension of $T$. \ Since $\mathcal{H}\subseteq ker[S_{\mathcal{M}}^{*n}, S_{\mathcal{M}}]$, it follows that $P_{\mathcal{M}}S^{*n}Sf= SP_{\mathcal{M}}S^{*n}f$ for $f\in \mathcal{H}$.\\
	Then
	\begin{align*}
		\langle S^{*}f,  P_{\mathcal{M}}S^{*n}f\rangle&= 	\langle f,  SP_{\mathcal{M}}S^{*n}f\rangle\\
		&=\langle f,  P_{\mathcal{M}}S^{*n}Sf\rangle\\
		&=\langle f, S^{*n}Sf\rangle\\
		&=\langle f, SS^{*n}f\rangle\\
		&=	\langle S^{*}f, S^{*n}f\rangle
	\end{align*}
	Hence $S^{*n}f=P_{\mathcal{M}}S^{*n}f$ and so $S^{*n}f \in \mathcal{M}$, i.e., $\mathcal{L}\subseteq \mathcal{M}$. \ This completes the proof.
\end{proof}

\begin{proposition}
	For $k=1,2$, let $T_{k} \in \mathcal{B}(\mathcal H_k)$ be  an operator that admits an extension that is $m$--normal for every $m\geq n$, and let $S_{k}$ be a minimal $n$--normal extension, acting on the Hilbert space $\mathcal{K}_{k} \supseteq \mathcal{H}_k$. \ If $T_{1}$  and $T_{2}$ are unitarily equivalent then so are the minimal $n$--normal extensions $S_1$ and $S_2$. 
\end{proposition}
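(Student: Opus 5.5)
The natural approach mirrors the classical uniqueness proof for minimal normal extensions of subnormal operators (Halmos, Conway). Let $V:\mathcal H_1\to\mathcal H_2$ be a unitary with $VT_1=T_2V$. By Theorem \ref{thm 229}, each minimal extension satisfies
$$
\mathcal K_k=\bigvee\{(S_k^{*m})^{j}h:\ h\in\mathcal H_k,\ j\ge 0,\ m\ge n\}.
$$
Here we take $S_k$ itself to be $m$--normal for all $m\ge n$, since this is the setting in which Theorem \ref{thm 229} is available. We then define $W$ on finite linear combinations of these generators by
$$
W\Big(\sum_{\ell} (S_1^{*m_\ell})^{j_\ell}h_\ell\Big):=\sum_{\ell}(S_2^{*m_\ell})^{j_\ell}Vh_\ell,
$$
and aim to show that $W$ is well defined and isometric. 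It will then extend to a unitary $\mathcal K_1\to\mathcal K_2$: its range is dense by the same span description of $\mathcal K_2$.

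\textbf{Isometry (the main step).} We must show
$$
\big\langle (S_1^{*m})^{j}h,(S_1^{*m'})^{j'}h'\big\rangle=\big\langle (S_2^{*m})^{j}Vh,(S_2^{*m'})^{j'}Vh'\big\rangle .
$$
Write the left side as $\langle S_1^{m'j'}S_1^{*mj}h,h'\rangle$. The plan is to commute the powers of $S_1$ past the powers of $S_1^*$, using $m$--normality for all $m\ge n$. First, $S^{*p}$ commutes with $S^{p}$ and with $S^{p+1}$ for $p\ge n$. This yields enough commutation relations, for example $S^{*p}S^{q}=S^{q}S^{*p}$ whenever $p,q\ge n$, since $S^{p}$ and $S^{q}$ are both normal and $\gcd$-type arguments connect them. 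The goal is to reduce the inner product to the form $\langle S_1^{a}h,S_1^{b}h'\rangle=\langle T_1^{a}h,T_1^{b}h'\rangle$, which is transported exactly by $V$.

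The obstacle is the exponent bookkeeping. The exponents $mj$ and $m'j'$ are multiples of integers $\ge n$, or zero, so products of the form $S^{A}S^{*B}$ with $A,B\in\{0\}\cup[n,\infty)$ must be rearranged. When $A,B\ge n$, I would use Fuglede's theorem: $S^{B}$ is normal and commutes with $S^{A}$, hence $S^{*B}$ commutes with $S^{A}$. When $A=0$ or $B=0$, nothing needs to be done.

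After the rearrangement, $\langle S_1^{A}S_1^{*B}h,h'\rangle=\langle S_1^{*B}h,S_1^{*A}h'\rangle$, which is still not an expression in $T_1$. I would therefore instead expand to $\langle S_1^{*B}h, S_1^{*A}h'\rangle$ and compute it via normality: $\|S^{*B}h\|=\|S^{B}h\|=\|T^Bh\|$. The mixed terms are then recovered by polarization applied to the normal operators $S^{B}$ and $S^{A}$. When $A\ne B$, I would write $S^{*B}=S^{*(B-A)}S^{*A}$ (if $B-A\ge n$, or otherwise via $\mathrm{lcm}$ powers) and use that $S^{N}$ is normal for $N=AB$, in order to reduce to $\langle T^{\cdot}h,T^{\cdot}h'\rangle$. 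Making this reduction uniform is the step I expect to require the most care.

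\textbf{Conclusion.} Once the inner products match, $W$ is well defined and isometric, and it is onto. Moreover, $W$ intertwines the two extensions. It carries $S_1^*$-words to $S_2^*$-words by construction. Also, $S_1$ maps a generator $(S_1^{*m})^{j}h$ to $(S_1^{*m})^{j}T_1h$ by the commutation shown in the proof of Theorem \ref{thm 229}, and correspondingly for $S_2$. Hence $WS_1=S_2W$ on a dense set, and by continuity everywhere. Since $W|_{\mathcal H_1}=V$, the unitary $W$ implements the equivalence of $S_1$ and $S_2$ and extends the given equivalence of $T_1$ and $T_2$.
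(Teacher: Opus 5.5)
\textbf{The isometry step is left unproved.} Your architecture is the same as the paper's: define the map on generators by $S_1^{*B}h\mapsto S_2^{*B}Vh$, check that it preserves inner products, extend it to a unitary, and verify the intertwining on a dense set. The problem is the isometry step, which is the heart of the proof.

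You correctly get $S^{A}S^{*B}=S^{*B}S^{A}$ from Fuglede's theorem ($S^{B}$ is normal because $B\ge n$, and it commutes with $S^{A}$). But you never use this relation. The identity you display, $\langle S_1^{A}S_1^{*B}h,h'\rangle=\langle S_1^{*B}h,S_1^{*A}h'\rangle$, just moves $S_1^{A}$ back across the inner product, so it returns you to the starting expression.

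The fallback you then sketch does not work as stated:
\begin{itemize}
\item Polarization applied to $\|S^{*B}x\|=\|S^{B}x\|$ only recovers inner products $\langle S^{*B}h,S^{*B}h'\rangle$ with the \emph{same} operator on both sides. It cannot produce the cross term $\langle S^{*B}h,S^{*A}h'\rangle$ when $A\neq B$.
\item Factoring $S^{*B}=S^{*(B-A)}S^{*A}$ and invoking normality of $S^{AB}$ or of $\mathrm{lcm}$ powers is not an argument. Any version of it that works would need a further commutation relation of exactly the Fuglede type, which you don't supply.
\end{itemize}
So, as written, it is not shown that $W$ is well defined and isometric.

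\textbf{The fix is already in hand.} For $B\ge n$ (or $B=0$) and any $A\ge 0$, use $h,h'\in\mathcal{H}$ and $S|_{\mathcal{H}}=T$:
\[
\langle S^{*B}h,S^{*A}h'\rangle=\langle S^{A}S^{*B}h,h'\rangle=\langle S^{*B}S^{A}h,h'\rangle=\langle S^{A}h,S^{B}h'\rangle=\langle T^{A}h,T^{B}h'\rangle .
\]
Since $\langle T_2^{A}Vh,T_2^{B}Vh'\rangle=\langle T_1^{A}h,T_1^{B}h'\rangle$, the Gram matrices agree, so $W$ is well defined and isometric on the span. The rest of your argument then goes through. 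Surjectivity follows because the range is closed and contains all generators of $\mathcal{K}_2$. The intertwining uses $S_kS_k^{*B}=S_k^{*B}S_k$, which is the same Fuglede step with $A=1$.

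With this one-line repair, your proof covers every generator $(S^{*m})^{j}h$ in the description of $\mathcal{K}$ from Theorem~\ref{thm 229}. The paper's computation, by contrast, only checks vectors of the form $f+S^{*n}g$ using $n$--normality.
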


\begin{proof}
Let $\mathcal{K}_1$ and $\mathcal{K}_2$ be as in Theorem \ref{thm 229}, and let $\mathcal{L}_1$ and $\mathcal{L}_2$ be defined as in the Proof of Theorem \ref{thm 229}. \ Suppose that $U: \mathcal{L}_{1} \rightarrow \mathcal{L}_{2}$ is a unitary operator such that $UT_{1}=T_{2}U$. \ Define $V$ on $\mathcal{K}_{1}$ by
	$$
	V(S^{*nk}_{1}h):=S^{*nk}_{2}Uh, ( h\in \mathcal{H}_{1}, k \ge 0).
	$$
Note that $V\mid_{\mathcal{H}_{1}}=U$. \ For $f,g \in \mathcal{L}_{1}$,
		\begin{align*}
			||Uf+S^{*n}_{2}Ug||& =( Uf+S^{*n}_{2}g, Uf+S^{*n}_{2}g)\\
			&=(Uf,Uf)+(Ug, S^{n}_{2}Uf)+ (S^{n}_{2}Uf, Ug)+ (S^{n}_{2}Ug, S^{n}_{2}Ug) \\ &(\textrm{because } S^{n}_{2}S^{*n}_{2}=S^{*n}_{2}S^{n}_{2} \textrm{ for all } h \in \mathcal{L}_{2}) \\
			&=(Uf,Uf)+(Ug, US^{n}_{1}f)+ (US^{n}_{1}f, Ug)+ (US^{n}_{1}g, US^{n}_{1}g) =||Uf+S^{*n}_{1}g || \\
			&(\textrm{because U is unitary  and } UT_{1}=T_{2}U \Rightarrow US_{1}=S_{2}U \Rightarrow US^{n}_{1}=S^{n}_{1}U ).
		\end{align*}
Thus,
		$$
		V (f+S^{*n}_{1}g)=Uf+S^{*n}_{2}Ug.
		$$
As a result, $V$ extends to a unitary operator from $\mathcal{K}_{1}$ onto $\mathcal{K}_{2}$. \ Moreover,  $VS_{1}=S_{2}V $ holds from the following observation: given $h\in \mathcal{H}_{1}$, we have
		$$
		VS_{1}h=US_{1}h=S_{2}Uh=S_{2}Vh
		$$
and
		$$
		VS_{1}(S^{*n}_{1}h)=VS^{*n}_{1}S_{1}h=S^{*n}_{2}US_{1}h=S^{*n}_{2}S_{2}Uh=S_{2}S^{*n}_{2}Uh=S_{2}V(S^{*n}_{1}h).
		$$
		This completes the proof.
	\end{proof}
	
	\begin{corollary}
		Let $T\in \mathcal{B}(\mathcal{H})$ be an operator that admits an extension that is $m$--normal for every $m\geq n$, and let $S_{1}$  and $S_{2}$ be minimal $n$--normal extensions of $T$. \ Then  $S_{1}$  and $S_{2}$ are  unitarily equivalent.
	\end{corollary}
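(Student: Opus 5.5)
The corollary should follow directly from the preceding Proposition, specialised to $T_1=T_2=T$. I will set $\mathcal{H}_1=\mathcal{H}_2=\mathcal{H}$, $T_1=T_2=T$, and take $U=I_{\mathcal{H}}$ as the unitary intertwining $T_1$ and $T_2$. Since $T$ admits an extension that is $m$--normal for every $m\geq n$, both copies satisfy the hypothesis of the Proposition. The two minimal $n$--normal extensions $S_1$ on $\mathcal{K}_1$ and $S_2$ on $\mathcal{K}_2$ then play the roles of the minimal extensions there. The Proposition yields a unitary $V:\mathcal{K}_1\to\mathcal{K}_2$ with $VS_1=S_2V$, and moreover $V|_{\mathcal{H}}=I_{\mathcal{H}}$, so the equivalence even fixes $\mathcal{H}$.

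The one point needing care is that the Proposition's argument uses the description of $\mathcal{K}_k$ from Theorem \ref{thm 229}. Concretely, $\mathcal{K}_k=\bigvee\{(S_k^{*m})^{j}h : h\in\mathcal{H},\ j\ge 0,\ m\ge n\}$, and this requires each $S_k$ to be $m$--normal for all $m\ge n$, not merely $n$--normal. So before invoking the Proposition I will check that each minimal $n$--normal extension $S_k$ falls in the setting of Theorem \ref{thm 229}. My plan is to realise $S_k$ as the restriction of the given extension to the subspace $\mathcal{L}$ built in the proof of Theorem \ref{thm 229}, which is reducing and contained in every $n$--normal extension space. Minimality then forces $\mathcal{K}_k=\mathcal{L}$ up to the identification. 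Powers of a reducing restriction of an operator whose $m$-th powers are normal are again normal, so $S_k$ inherits $m$--normality for $m\ge n$.

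The main obstacle is precisely this justification that an arbitrary minimal $n$--normal extension is of the form covered by Theorem \ref{thm 229}. If that reduction proves delicate, I will read the corollary as applying to minimal extensions within the class considered in the Proposition, where it is then immediate. With that settled, the remaining steps are the formal substitution $U=I$ into the Proposition and reading off $S_1\cong S_2$.
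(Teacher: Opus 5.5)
Your main step is exactly how the paper obtains the corollary. The paper gives no separate proof; the corollary is the case $\mathcal{H}_1=\mathcal{H}_2=\mathcal{H}$, $T_1=T_2=T$, $U=I$ of the preceding Proposition. The Proposition's hypotheses are word for word those of the corollary: each $T_k$ admits an extension that is $m$--normal for all $m\ge n$, and $S_k$ is a minimal $n$--normal extension. So nothing further needs checking before you invoke it, and your fallback reading is simply the paper's argument.

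The extra verification you plan for the ``main obstacle'', however, would not work as described, because it is circular. Each $S_k$ acts on an abstract space $\mathcal{K}_k$ that comes with no embedding into the space $\widetilde{\mathcal{K}}$ of the given extension $\widetilde S$. The minimality argument in the proof of Theorem \ref{thm 229} only compares $\widetilde S$--invariant subspaces of that single space $\widetilde{\mathcal{K}}$. To ``realise $S_k$ as $\widetilde S|_{\mathcal{L}}$'' you would need a unitary $\mathcal{K}_k\to\mathcal{L}$ that is the identity on $\mathcal{H}$ and intertwines $S_k$ with $\widetilde S|_{\mathcal{L}}$, which is precisely the uniqueness being proved. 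Also, $\mathcal{L}$ need not reduce $\widetilde S$: for $n\ge 2$, $\widetilde S^{*}h$ generally lies outside $\mathcal{L}$, and $\mathcal{L}$ is only guaranteed to reduce $\widetilde S^{m}$ for $m\ge n$.

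If you want to remove the worry honestly, argue directly on each $S_k$ using only $n$--normality.
\begin{itemize}
\item Taking adjoints in $S_k^*S_k^n=S_k^nS_k^*$ gives $S_kS_k^{*n}=S_k^{*n}S_k$. Hence $\mathcal{L}_k:=\bigvee\{S_k^{*nj}h: h\in\mathcal{H},\ j\ge 0\}$ is $S_k$--invariant and reduces $S_k^n$.
\item So $S_k|_{\mathcal{L}_k}$ is an $n$--normal extension of $T$, and minimality (in the invariant-subspace sense used in Theorem \ref{thm 229}) forces $\mathcal{L}_k=\mathcal{K}_k$.
\item Since $S_k^n$ is normal, $\langle S_k^{*nj}h,S_k^{*ni}g\rangle=\langle T^{ni}h,T^{nj}g\rangle$ depends only on $T$. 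Therefore $S_1^{*nj}h\mapsto S_2^{*nj}h$ extends to a unitary $V:\mathcal{K}_1\to\mathcal{K}_2$ with $V|_{\mathcal{H}}=I$.
\item Finally $VS_1=S_2V$, because $S_kS_k^{*nj}h=S_k^{*nj}Th$ on the generators.
\end{itemize}
This proves the corollary for every sub-$n$--normal $T$. It needs neither the $m$--normality hypothesis for $m>n$ nor the Proposition's proof.
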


\noindent\textit{Acknowledgments}. \ R.E. Curto is partially supported
by U.S. National Science Foundation grant DMS-2247167. \ T. Prasad is
supported in part by the Mathematical Research Impact Centric
Support, MATRICS (MTR/2021/000373) by SERB, Department of Science
and Technology (DST), Government of India.

%
%

\end{document}